\newtheorem{thm}{Theorem}
\newdefinition{dfn}{Definition}
\newdefinition{ex}{Example}
\newproof{proof}{Proof}
\newtheorem{cor}{Corollary}
\begin{document}

\begin{frontmatter}



\title{Asymptotic formulas for eigenvalues and eigenfunctions  of boundary value problem}


\author[rvt]{O. Sh. Mukhtarov\corref{cor1}}
\ead{omukhtarov@yahoo.com} \cortext[cor1]{Corresponding Author (Tel:
+90 356 252 16 16, Fax: +90 356 252 15 85)}
\author[rvt]{K. Aydemir}
\ead{kadriye.aydemir@gop.edu.tr}

\address[rvt]{Department of Mathematics, Faculty of Arts and Science, Gaziosmanpa\c{s}a University,\\
 60250 Tokat, Turkey}

\begin{abstract}
In this paper we are concerned with a new class of BVP' s consisting
of eigendependent boundary conditions and two supplementary
transmission conditions at one interior point. By modifying some
techniques of classical Sturm-Liouville theory
 and  suggesting own approaches we find asymptotic formulas for the eigenvalues and eigenfunction.
\end{abstract}

\begin{keyword}
Sturm-Liouville problems, eigenvalue, eigenfunction, asymptotics of
eigenvalues and eigenfunction.


\end{keyword}

\end{frontmatter}


\section{Introduction}
 Many topics in
mathematical physics require investigations of eigenvalues and
eigenfunctions of boundary value problems.  These investigations are
of utmost importance for theoretical and applied problems in
mechanics, the theory of vibrations and stability, hydrodynamics,
elasticity, acoustics, electrodynamics, quantum mechanics, theory of
systems and their optimization, theory of random processes, and many
other branches of natural science. Such problems are formulated in
many different ways.

In this study we shall investigate a new class of Sturm-Liouville
type problem which consist of a Sturm-Liouville equation contained
\begin{equation}\label{1}
\Gamma (y):=-y^{\prime \prime }(x,\lambda)+ q(x)y(x,\lambda)=\lambda
y(x,\lambda)
\end{equation}
to hold in finite interval $(-\pi, \pi) $ except at one inner point
$0 \in (-\pi, \pi) $ , where discontinuity in $u  \ \textrm{and} \
u'$ are prescribed by transmission conditions
\begin{equation}\label{4}
\Gamma_{1}(y):=a_{1}y'(0-,\lambda)+a_{2}y(0-,\lambda)+a_{3}y'(0+,\lambda)+a_{4}y(0+,\lambda)=0,
\end{equation}
\begin{equation}\label{5}
\Gamma_{2}(y):=b_{1}y'(0-,\lambda)+b_{2}y(0-,\lambda)+b_{3}y'(0+,\lambda)+b_{4}y(0+,\lambda)=0,
\end{equation}
together with the  boundary conditions
\begin{equation}\label{2}
 \Gamma_{3}(y):=\cos \alpha y(-\pi,\lambda)+\sin\alpha y'(-\pi,\lambda)=0,
\end{equation}
\begin{equation}\label{3}
\Gamma_{4}(y):=\cos\beta y(\pi,\lambda)+\sin\beta y'(\pi,\lambda)=0.
\end{equation}
 We describe some analytical solutions of the
problem and find asymptotic formulas of eigenvalues and
eigenfunctions. These boundary conditions are of great importance
for theoretical and applied studies and have a definite mechanical
or physical meaning (for instance, of free ends). Also the problems
with transmission conditions  arise in mechanics, such as thermal
conduction problems for a thin laminated plate, which studied in
\cite{tik}. This class of problems essentially differs from the
classical case, and its investigation requires a specific approach
based on the method of separation of variables. Moreover the
eigenvalue parameter appear in one of the boundary conditions and
two new conditions added to boundary conditions called transmission
conditions.
\section{\textbf{The fundamental solutions and characteristic
Function } }
 Let  T=$ \left[%
\begin{array}{cccc}
  \beta^{-}_{10} & \beta^{-}_{11} & \beta^{+}_{10} & \beta^{+}_{11} \\
  \beta^{-}_{20} & \beta^{-}_{21} & \beta^{+}_{20} & \beta^{+}_{21}
  \\
\end{array} %
 \right]. $
 Denote  the determinant of the determinant of the k-th
 and
i-th columns of the matrix T  by $\rho_{kj}$. Note that throughout
this study we shall assume that $ \rho_{12}>0 \ \ \textrm{and} \
\rho_{34}>0.$
 With a view to constructing the
characteristic function we define  two solution $\phi(x,\lambda) \
\textrm{and}  \ \chi(x,\lambda)$ as follows. Denote the solutions of
the equation (\ref{1}) satisfying the initial conditions
\begin{equation}\label{7}
y(-\pi)=\sin\alpha, \ y^{\prime }(-\pi)=-\cos\alpha
\end{equation}
and
\begin{equation}\label{10}
y(\pi)=-\sin\beta, \ y^{\prime }(\pi)=\cos\beta
\end{equation}
by $u=\phi_1(x,\lambda)$ and $u=\chi_{2}(x,s),$ respectively. It is
known that the initial-value problems has an unique solutions \
$u=\phi_1(x,\lambda)$ and $u=\chi_{2}(x,\lambda),$ \ which is an
entire function of $s \in \mathbb{C}$ for each fixed $x \in
[-\pi,0)$  and $x \in (0,\pi]$ respectively. (see, for example,
\cite{titc}). Using this solutions we can prove that the equation
(\ref{1}) on $ [-\pi,0)$ and $x \in (0,\pi]$ has solutions
$u=\varphi^{+}(x,s)$ and $u=\psi^{-}(x,s),$ satisfying the initial
conditions
\begin{eqnarray}\label{8}
&&y(0) =\frac{1}{\rho_{12}}(\rho_{23}\phi_1(0,\lambda)
+\rho_{24}\frac{\partial\phi_1(0,\lambda)}{\partial x})\\
&& \label{9} y^{\prime }(0)
=\frac{-1}{\rho_{12}}(\rho_{13}\phi_1(0,\lambda)
+\rho_{14}\frac{\partial\phi_1(0,\lambda)}{\partial x}).
\end{eqnarray}
and
\begin{eqnarray}\label{11}
&&y(0) =\frac{-1}{\rho_{34}}(\rho_{14}\chi_{2}(0,\lambda
)+\rho_{24}\frac{\partial\chi_{2}(0,\lambda )}{\partial x}),\\
&& \label{12} y^{\prime }(0)
=\frac{1}{\rho_{34}}(\rho_{13}\chi_{2}(0,\lambda
)+\rho_{23}\frac{\partial\chi_{2}(0,\lambda )}{\partial x}).
\end{eqnarray}
respectively.
\section{ Some asymptotic approximation formulas
for fundamental solutions}
  By applying the method of
variation of parameters we can prove that the next integral and
integro-differential equations are hold for $k=0$ and $k=1.$
\begin{eqnarray}\label{(4.2)}
\frac{d^{k}}{dx^{k}}\phi_1(x,\lambda) &=&\sin\alpha
\frac{d^{k}}{dx^{k}}\cos \left[s\left( x+\pi\right)\right]-
\frac{\cos\alpha}{s}\frac{d^{k}}{dx^{k}}\sin \left[s\left( x+\pi\right)\right]\nonumber \\
&&+ \frac{1}{s}\int\limits_{-\pi}^{x}\frac{d^{k}}{dx^{k}}\sin
\left[s\left( x-z\right)\right] q(z)\phi_1(z,\lambda) dz
\end{eqnarray}
\begin{eqnarray}
\frac{d^{k}}{dx^{k}}\chi_1(x,\lambda )
&=&-\frac{1}{\rho_{34}}(\rho_{14}\chi _2(0,\lambda )+\rho_{24}
\frac{\partial\chi_2(0,\lambda )}{\partial x})
\frac{d^{k}}{dx^{k}}\cos \left[
s x\right] \nonumber \\
&&+\frac{1}{s \rho_{34}}(\rho_{13}\phi_2(0,\lambda
)+\rho_{23}\frac{\partial\chi_2(0,\lambda )}{\partial
x})\frac{d^{k}}{dx^{k}}\sin \left[s x\right]  \nonumber  \\
&&+\frac{1}{s}\int\limits_{x}^{0}\frac{d^{k}}{dx^{k}}\sin \left[
s\left( x-z\right) \right] q(z)\chi_2(z,\lambda )dz \label{(4.22)}
\end{eqnarray}
for $x \in [-\pi,0)$ and
\begin{eqnarray}
\frac{d^{k}}{dx^{k}}\phi_2(x,\lambda)
&=&\frac{1}{\rho_{12}}(\rho_{23}\phi_1(0,\lambda)
+\rho_{24}\frac{\partial\phi_1(0,\lambda)}{\partial x})
\frac{d^{k}}{dx^{k}}\cos \left[
s x\right]  \nonumber  \\
&&-\frac{1}{s
\rho_{12}}(\rho_{13}\phi_1(0,\lambda)+\rho_{14}\frac{\partial\phi_1(0,\lambda)}{\partial
x})\frac{d^{k}}{dx^{k}}\sin \left[s x \right]  \nonumber \\
&&+\frac{1}{s}\int\limits_{0}^{x}\frac{d^{k}}{dx^{k}}\sin\left[s\left(
x-z\right) \right] q(z)\phi_2(z,\lambda)dz \label{(4.a)}
\end{eqnarray}
\begin{eqnarray}
\frac{d^{k}}{dx^{k}}\chi_{2}(x,\lambda )&=&-\sin \beta
\frac{d^{k}}{dx^{k}} \cos \left[s\left(
\pi-x\right)\right]-\frac{\cos \beta}{s}\frac{d^{k}}{dx^{k}}\sin
\left[ s\left(\pi-x\right) \right] \nonumber\\
&&+\frac{1}{s}\int\limits_{x}^{\pi}\frac{d^{k}}{dx^{k}}\sin
\left[s\left(x-z\right)\right] q(z)\chi_{2}(z,\lambda)dz
\label{(4.21)}
\end{eqnarray}
for $x \in (0,\pi]$. Now we are ready to prove the following
theorems.
\begin{thm} \label{(4.n)}
Let $\lambda =s^{2}$, $Ims=t.$ Then \ if $\sin\alpha \neq 0$
\begin{eqnarray}
\frac{d^{k}}{dx^{k}}\phi _{1\lambda}(x ) &=&\sin\alpha \frac{d^{k}}{dx^{k}}%
\cos \left[ s\left( x+\pi\right) \right] +O\left( \left| s\right|
^{k-1}e^{_{\left| t\right| (x+\pi)}}\right)
\label{(4.3)} \\
\frac{d^{k}}{dx^{k}}\phi _{2\lambda}(x )
&=&\frac{\rho_{24}}{\rho_{12}}\sin\alpha s \sin \left[s\pi\right]
\cos\left[ s x\right]  +O\left(|s| ^{k} e^{\left|
t\right|(x+\pi)}\right) \label{(4.4)}
\end{eqnarray}
as $\left| \lambda \right| \rightarrow \infty $, while if $\sin\alpha =0$%
\begin{eqnarray}
\frac{d^{k}}{dx^{k}}\phi _{1\lambda}(x ) &=&-\frac{\cos\alpha}{s}%
\frac{d^{k}}{dx^{k}}\sin \left[ s(x+\pi)\right] +O\left( \left|
s\right| ^{k-2}e^{\left| t\right| (x+1)}\right) \label{(4.5)}
\\
\frac{d^{k}}{dx^{k}}\phi _{2\lambda}(x )
&=&-\frac{\rho_{24}}{\rho_{12}}\cos\alpha \cos \left[s\pi\right]
\cos\left[ s x\right]  +O\left(|s| ^{k-1} e^{\left|
t\right|(x+\pi)}\right)   \label{(4.6)}
\end{eqnarray}
as $\left| \lambda \right| \rightarrow \infty $ ($k=0,1)$. Each of
this asymptotic equalities hold uniformly for $x.$
\end{thm}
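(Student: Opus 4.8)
The plan is to derive the four formulas from the integral and integro-differential representations (\ref{(4.2)})--(\ref{(4.21)}), working from the left endpoint $-\pi$ to the transmission point $x=0$ and then transferring the data across to the right subinterval via (\ref{8})--(\ref{9}), exactly in the spirit of the classical Titchmarsh argument. First I would establish a crude a priori bound for $\phi_1$. Writing $t=\mathrm{Im}\,s$ and using $|\cos[s(x+\pi)]|\le e^{|t|(x+\pi)}$ and $|\sin[s(x-z)]|\le e^{|t|(x-z)}$ for $-\pi\le z\le x$, I set $M(x):=e^{-|t|(x+\pi)}|\phi_1(x,\lambda)|$ in (\ref{(4.2)}) with $k=0$. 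Since $e^{-|t|(x+\pi)}e^{|t|(x-z)}=e^{-|t|(z+\pi)}$, this gives
$$M(x)\le |\sin\alpha|+\frac{|\cos\alpha|}{|s|}+\frac{1}{|s|}\int_{-\pi}^{x}|q(z)|\,M(z)\,dz,$$
and Gronwall's inequality yields $M(x)\le C$ uniformly in $x$ once $|s|$ is large, i.e. $\phi_1(x,\lambda)=O(e^{|t|(x+\pi)})$. Substituting this bound back into the integral term of (\ref{(4.2)}) shows that term is $O(|s|^{-1}e^{|t|(x+\pi)})$, and together with $-\frac{\cos\alpha}{s}\sin[s(x+\pi)]=O(|s|^{-1}e^{|t|(x+\pi)})$ this produces (\ref{(4.3)}) for $k=0$ when $\sin\alpha\neq0$. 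For $k=1$ I differentiate (\ref{(4.2)}); the boundary term from Leibniz's rule vanishes because $\sin[s(x-z)]|_{z=x}=0$, leaving $\int_{-\pi}^{x}\cos[s(x-z)]q(z)\phi_1(z,\lambda)\,dz=O(e^{|t|(x+\pi)})=O(|s|^{k-1}e^{|t|(x+\pi)})$, which is (\ref{(4.3)}) for $k=1$.

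Next I would carry this information across $x=0$. Evaluating the formulas just proved at $x=0$ gives $\phi_1(0,\lambda)=\sin\alpha\cos[s\pi]+O(|s|^{-1}e^{|t|\pi})$ and $\partial_x\phi_1(0,\lambda)=-s\sin\alpha\sin[s\pi]+O(e^{|t|\pi})$. Feeding these into the coefficients $\frac{1}{\rho_{12}}(\rho_{23}\phi_1(0,\lambda)+\rho_{24}\partial_x\phi_1(0,\lambda))$ and $-\frac{1}{s\rho_{12}}(\rho_{13}\phi_1(0,\lambda)+\rho_{14}\partial_x\phi_1(0,\lambda))$ appearing in representation (\ref{(4.a)}), I observe that the single dominant contribution, of order $|s|$, is the $\frac{\rho_{24}}{\rho_{12}}\partial_x\phi_1(0,\lambda)\sim\frac{\rho_{24}}{\rho_{12}}s\sin\alpha\sin[s\pi]$ term in the coefficient of $\cos[sx]$; every remaining term is $O(e^{|t|\pi})$ or smaller and collapses into the remainder. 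Running the same Gronwall-plus-substitution scheme on (\ref{(4.a)}) — now with driving terms of size $O(|s|e^{|t|(x+\pi)})$, so the a priori bound reads $\phi_2(x,\lambda)=O(|s|e^{|t|(x+\pi)})$ — then gives (\ref{(4.4)}) for $k=0$, and one further differentiation gives $k=1$. The case $\sin\alpha=0$ is handled identically: the $\cos$-term drops out and $-\frac{\cos\alpha}{s}\sin[s(x+\pi)]$ becomes the leading term for $\phi_1$, yielding (\ref{(4.5)}), while the induced values $\phi_1(0,\lambda)\sim-\frac{\cos\alpha}{s}\sin[s\pi]$ and $\partial_x\phi_1(0,\lambda)\sim-\cos\alpha\cos[s\pi]$ make $-\frac{\rho_{24}}{\rho_{12}}\cos\alpha\cos[s\pi]\cos[sx]$ the dominant term of $\phi_2$, which is (\ref{(4.6)}).

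The main obstacle is the bookkeeping rather than any isolated hard estimate: at each stage one must weigh powers of $|s|$ against the exponential factors $e^{|t|(x+\pi)}$ in order to decide which terms persist in the leading asymptotics and which are absorbed into the error, and one must verify that the Gronwall bounds, and hence all the remainder estimates, are uniform in $x$ on each subinterval and remain stable under the transfer of data through the transmission conditions (\ref{8})--(\ref{9}) at $x=0$. Particular care is needed with the sign of $\partial_x\phi_1(0,\lambda)$ when matching the displayed coefficient in (\ref{(4.4)}).
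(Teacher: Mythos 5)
Your strategy --- an a priori bound for $\phi_1$ from the Volterra equation (\ref{(4.2)}) via Gronwall's inequality, back-substitution to get (\ref{(4.3)}), evaluation at $x=0$, transfer through the transmission data into (\ref{(4.a)}), and a second Gronwall/back-substitution pass for $\phi_2$ --- is the standard Titchmarsh-type argument that the paper's Section 3 sets up, and the paper itself offers no written proof to compare against (its proof environment is empty). Your treatment of (\ref{(4.3)}), (\ref{(4.5)}) and (\ref{(4.6)}), including the $k=1$ cases and the uniformity in $x$, is correct.

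The genuine problem is the sign in (\ref{(4.4)}), precisely the point you flag in your last sentence but never resolve. Your own (correct) intermediate formula $\partial_x\phi_1(0,\lambda)=-s\sin\alpha\sin[s\pi]+O(e^{|t|\pi})$ gives, for the coefficient of $\cos[sx]$ in (\ref{(4.a)}),
\[
\frac{1}{\rho_{12}}\left(\rho_{23}\phi_1(0,\lambda)+\rho_{24}\frac{\partial\phi_1(0,\lambda)}{\partial x}\right)=-\frac{\rho_{24}}{\rho_{12}}\,s\sin\alpha\sin[s\pi]+O\left(e^{|t|\pi}\right),
\]
so the argument you outline actually proves
\[
\frac{d^{k}}{dx^{k}}\phi_{2}(x,\lambda)=-\frac{\rho_{24}}{\rho_{12}}\,\sin\alpha\, s\,\sin[s\pi]\,\frac{d^{k}}{dx^{k}}\cos[sx]+O\left(|s|^{k}e^{|t|(x+\pi)}\right),
\]
the opposite sign to (\ref{(4.4)}) as printed. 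Your line ``$\frac{\rho_{24}}{\rho_{12}}\partial_x\phi_1(0,\lambda)\sim\frac{\rho_{24}}{\rho_{12}}s\sin\alpha\sin[s\pi]$'' silently drops the minus sign from the display immediately preceding it, so as written the proposal contains a false step rather than a proof of (\ref{(4.4)}). The sign cannot be argued away: the identical computation in the $\sin\alpha=0$ branch produces the minus sign that (\ref{(4.6)}) does carry, so (\ref{(4.4)}) is internally inconsistent with (\ref{(4.2)}), (\ref{(4.a)}) and the transmission data --- evidently a sign error in the statement itself, which then propagates to the characteristic-function asymptotics (\ref{(4.15)}). The correct course is to prove the formula with the minus sign and record the discrepancy explicitly, rather than to conform the derivation to the printed statement. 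Two smaller points: for $k=1$ the leading terms of (\ref{(4.4)}) and (\ref{(4.6)}) must also be differentiated (your ``one further differentiation'' implicitly does this, though the printed statement omits the $\frac{d^{k}}{dx^{k}}$ on $\cos[sx]$), and the factor $e^{|t|(x+1)}$ in (\ref{(4.5)}) should be $e^{|t|(x+\pi)}$, which is what your estimate in fact delivers.
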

\begin{proof}

\end{proof}
Similarly  we can easily obtain the following Theorem for
$\chi_{i}(x,\lambda) (i=1,2).$
\begin{thm} \label{(c1)}
Let $\lambda =s^{2}$, $Ims=t.$ Then \ if $\sin\beta \neq 0$
\begin{eqnarray}
\frac{d^{k}}{dx^{k}}\chi _{2\lambda}(x ) &=&\sin\beta \frac{d^{k}}{dx^{k}}%
\cos \left[ s\left( \pi-x\right) \right] +O\left( \left| s\right|
^{k-1}e^{_{\left| t\right| (\pi-x)}}\right)
\label{(c2)} \\
\frac{d^{k}}{dx^{k}}\chi _{1\lambda}(x ) &=&-
\frac{\rho_{24}}{\rho_{34}}\sin\beta s \sin \left[s\pi\right]
\cos\left[ s x\right]  +O\left(|s| ^{k} e^{\left|
t\right|(\pi-x)}\right) \label{(4.n1)}
\end{eqnarray}
as $\left| \lambda \right| \rightarrow \infty $, while if $\sin\beta=0$%

\begin{eqnarray}
\frac{d^{k}}{dx^{k}}\chi _{2\lambda}(x ) &=&-\frac{\cos\beta }{s}%
\frac{d^{k}}{dx^{k}}\sin \left[ s(\pi-x)\right] +O\left( \left|
s\right| ^{k-2}e^{\left| t\right| (\pi-x)}\right) \label{(c3)}
\\
\frac{d^{k}}{dx^{k}}\chi _{1\lambda}(x) &=&-
\frac{\rho_{24}}{\rho_{34}}\cos\beta\cos \left[s\pi\right] \cos
\left[ s x \right]  +O\left(|s| ^{k-1} e^{\left|
t\right|(\pi-x)}\right)
\end{eqnarray}
as $\left| \lambda \right| \rightarrow \infty $ ($k=0,1)$. Each of
this asymptotic equalities hold uniformly for $x.$
\end{thm}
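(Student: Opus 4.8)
The argument is to run exactly parallel to the proof of Theorem~\ref{(4.n)}, with the left endpoint $-\pi$ replaced by the right endpoint $\pi$ and the roles of $\phi_{1},\phi_{2}$ now played by $\chi_{2},\chi_{1}$. Accordingly I would work from the integro-differential representation~(\ref{(4.21)}) for $\chi_{2}$ on $(0,\pi]$ and~(\ref{(4.22)}) for $\chi_{1}$ on $[-\pi,0)$. The plan has two stages: first control the branch $\chi_{2}$ that is anchored at $\pi$, then feed its Cauchy data at the interior point $0$ into the representation for $\chi_{1}$.

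First I would establish an a priori bound on $\chi_{2}$. Setting $k=0$ in~(\ref{(4.21)}) and using $|\cos[sw]|\le e^{|t|w}$, $|\sin[sw]|\le e^{|t|w}$ for real $w\ge 0$, I put $M(x)=e^{-|t|(\pi-x)}|\chi_{2}(x,\lambda)|$. On the range of integration $z\in[x,\pi]$ one has $|x-z|=z-x$, so the three exponential factors coming from the kernel, the weight, and $|\chi_{2}(z,\lambda)|=e^{|t|(\pi-z)}M(z)$ combine to
\[
e^{|t|(z-x)}\,e^{-|t|(\pi-x)}\,e^{|t|(\pi-z)}=1 ,
\]
and the inequality collapses to the Volterra form $M(x)\le C_{\beta}+\frac{1}{|s|}\int_{x}^{\pi}|q(z)|M(z)\,dz$ with $C_{\beta}=|\sin\beta|+|\cos\beta|/|s|$. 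Gronwall's inequality then gives $M(x)=O(C_{\beta})$, that is $|\chi_{2}(x,\lambda)|=O(e^{|t|(\pi-x)})$ when $\sin\beta\neq0$, and the sharper $|\chi_{2}(x,\lambda)|=O(|s|^{-1}e^{|t|(\pi-x)})$ when $\sin\beta=0$.

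Feeding these bounds back into~(\ref{(4.21)}) finishes the first stage: each $x$-derivative of the kernel contributes a factor $s$, so the integral term is $O(|s|^{k-1}e^{|t|(\pi-x)})$ when $\sin\beta\neq0$ and $O(|s|^{k-2}e^{|t|(\pi-x)})$ when $\sin\beta=0$. Comparing with the explicit free terms, of orders $|s|^{k}$ and $|s|^{k-1}$, yields~(\ref{(c2)}) and~(\ref{(c3)}). For $\chi_{1}$ I would then evaluate these formulas at $x=0$ for $k=0,1$; the structural point is that $\chi_{2}'(0,\lambda)$ carries an extra factor $s\sin[s\pi]$ and so is one power of $s$ larger than $\chi_{2}(0,\lambda)$. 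Substituting into the bracket multiplying $\frac{d^{k}}{dx^{k}}\cos[sx]$ in~(\ref{(4.22)}), the term $\rho_{24}\chi_{2}'(0,\lambda)$ dominates and produces the leading coefficient $-\frac{\rho_{24}}{\rho_{34}}\sin\beta\,s\sin[s\pi]$ (respectively $-\frac{\rho_{24}}{\rho_{34}}\cos\beta\cos[s\pi]$ when $\sin\beta=0$), while the $\sin[sx]$ coefficient and the short-range integral over $[x,0]$ fall into the error; this is~(\ref{(4.n1)}) and its degenerate analogue.

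The Gronwall step is routine. The part that needs care is the bookkeeping of powers of $s$ when transporting the data through~(\ref{(4.22)}): one must check that $\rho_{24}\chi_{2}'(0,\lambda)$ genuinely dominates $\rho_{14}\chi_{2}(0,\lambda)$ --- here the standing hypothesis $\rho_{34}>0$ keeps the representation nondegenerate --- and that in the case $\sin\beta=0$ it is precisely the improved a priori bound $O(|s|^{-1}e^{|t|(\pi-x)})$ on $\chi_{2}$ that lowers the integral remainder by one power of $s$ and so delivers the sharper error exponent in~(\ref{(c3)}). Uniformity in $x$ is automatic, since every estimate above is taken in the $x$-independent weighted norm $M$.
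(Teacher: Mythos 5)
Your overall strategy --- a weighted Volterra/Gronwall estimate for $\chi_{2}$ from (\ref{(4.21)}), followed by transporting the Cauchy data at $0$ through (\ref{(4.22)}) to obtain $\chi_{1}$ --- is exactly the argument the paper intends: the paper itself supplies no proof (it merely says the result follows ``similarly'' to the theorem for $\phi_{i}$, whose proof environment is also empty), and your two-stage scheme with the exponential cancellation $e^{|t|(z-x)}e^{-|t|(\pi-x)}e^{|t|(\pi-z)}=1$ is the standard way to fill it in. However, one step fails as written. With the initial data (\ref{10}), namely $\chi_{2}(\pi,\lambda)=-\sin\beta$ and $\chi_{2}'(\pi,\lambda)=\cos\beta$, the free term of (\ref{(4.21)}) is $-\sin\beta\,\frac{d^{k}}{dx^{k}}\cos[s(\pi-x)]-\frac{\cos\beta}{s}\frac{d^{k}}{dx^{k}}\sin[s(\pi-x)]$. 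Your comparison step therefore yields $\frac{d^{k}}{dx^{k}}\chi_{2\lambda}(x)=-\sin\beta\,\frac{d^{k}}{dx^{k}}\cos[s(\pi-x)]+O(|s|^{k-1}e^{|t|(\pi-x)})$, i.e.\ (\ref{(c2)}) with the opposite sign; then $\chi_{2}'(0,\lambda)=-s\sin\beta\sin[s\pi]+O(e^{|t|\pi})$, so the bracket in (\ref{(4.22)}) produces $+\frac{\rho_{24}}{\rho_{34}}\sin\beta\,s\sin[s\pi]\cos[sx]$, again opposite in sign to (\ref{(4.n1)}). (In the degenerate case $\sin\beta=0$ the signs do come out as stated.) So the statement as printed cannot be derived from (\ref{10}) and (\ref{(4.21)}) as printed: a correct writeup must either carry these minus signs through, or observe that (\ref{10}) should read $y(\pi)=\sin\beta$, $y'(\pi)=-\cos\beta$ for the claimed leading coefficients to emerge. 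Asserting that the comparison ``yields (\ref{(c2)})'' silently passes over this inconsistency, which is the paper's, but a blind proof has to confront it.

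A second, smaller omission: to conclude that the integral $\frac{1}{s}\int_{x}^{0}\sin[s(x-z)]\,q(z)\chi_{1}(z,\lambda)\,dz$ in (\ref{(4.22)}) falls into the error $O(|s|^{k}e^{|t|(\pi-x)})$, you need an a priori bound $|\chi_{1}(z,\lambda)|=O(|s|\,e^{|t|(\pi-z)})$ on $[-\pi,0)$; the Gronwall bound you established applies only to $\chi_{2}$ on $(0,\pi]$ and does not give this. The bound follows by running your weighted estimate a second time, now on (\ref{(4.22)}) with weight $|s|^{-1}e^{-|t|(\pi-x)}$, using that the data coefficients are $O(|s|\,e^{|t|\pi})$ by stage one; this second Gronwall step should be stated explicitly rather than left implicit in the phrase ``falls into the error.''
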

\section{Asymptotic behaviour
 of eigenvalues and corresponding eigenfunctions%
} It is well-known from ordinary differential equation theory that
the Wronskians $W[\phi _{1\lambda},\chi _{1\lambda}]_{x}$ and
$W[\phi _{2\lambda}, \chi _{2\lambda}]_{x}$ are independent of
variable $x.$ By using (\ref{4}) and (\ref{5}) we have
\begin{eqnarray*} \label{7} w_{1}(\lambda) &=& \phi_{1}(0,\lambda)\frac{\partial
\chi_{1}(0,\lambda)}{\partial x}-\chi_{1}(0,\lambda)\frac{\partial
\phi_{1}(0,\lambda)}{\partial x} \nonumber \\&=&
\frac{\rho_{12}}{\rho_{34}}(\phi_{2}(0,\lambda) \frac{\partial
\chi_{2}(0,\lambda)}{\partial x}-\chi_{2}(0,\lambda)\frac{\partial
\phi_{2}(0,\lambda)}{\partial x})\nonumber \\&=&
\frac{\rho_{12}}{\rho_{34}}w_{2}(\lambda)
\end{eqnarray*}

for each $\lambda \in \mathbb{C}$. It is convenient to introduce the
notation
 \begin{equation}\label{8}w(\lambda):=\rho_{34} w_{1}(\lambda) = \rho_{12}
w_{2}(\lambda).\end{equation}

Now by modifying the standard method we  prove that all eigenvalues
of the problem $(\ref{1})-(\ref{3})$ are real.
\begin{thm}
The  eigenvalues of the boundary-value-transmission problem
$(\ref{1})-(\ref{3})$ are real.
\end{thm}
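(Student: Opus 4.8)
The plan is to adapt the classical ``eigenvalues are real'' argument to the present piecewise setting, replacing the ordinary $L_{2}$ inner product by the weighted one
\[
\langle f,g\rangle:=\rho_{34}\int_{-\pi}^{0}f(x)\overline{g(x)}\,dx+\rho_{12}\int_{0}^{\pi}f(x)\overline{g(x)}\,dx,
\]
which is a genuine (positive definite) inner product precisely because $\rho_{12}>0$ and $\rho_{34}>0$. Suppose $\lambda_{0}$ is an eigenvalue with eigenfunction $u$. Since $q$ and all the coefficients $a_{i},b_{i},\alpha,\beta$ are real, $\overline{u}$ satisfies $\Gamma(\overline{u})=\overline{\lambda_{0}}\,\overline{u}$. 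Multiplying $\Gamma(u)=\lambda_{0}u$ by $\overline{u}$, multiplying $\Gamma(\overline{u})=\overline{\lambda_{0}}\,\overline{u}$ by $u$, subtracting, and integrating the Lagrange identity over $[-\pi,0)$ and $(0,\pi]$ separately, I obtain
\[
(\lambda_{0}-\overline{\lambda_{0}})\langle u,u\rangle=\rho_{34}\,[P(x)]_{-\pi}^{0-}+\rho_{12}\,[P(x)]_{0+}^{\pi},\qquad P(x):=u(x)\overline{u}'(x)-u'(x)\overline{u}(x).
\]
The whole proof reduces to showing that the right-hand side vanishes.

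I would dispose of the endpoint terms first. Because $\alpha$ is real, the nonzero vector $(\cos\alpha,\sin\alpha)$ is orthogonal to both $(u(-\pi),u'(-\pi))$ and $(\overline{u}(-\pi),\overline{u}'(-\pi))$ by (\ref{2}); hence these two Cauchy-data vectors are linearly dependent, so their determinant $P(-\pi)=u(-\pi)\overline{u}'(-\pi)-u'(-\pi)\overline{u}(-\pi)$ is zero. The identical argument applied to (\ref{3}) gives $P(\pi)=0$. Thus the only surviving term is the interior contribution $\rho_{34}P(0-)-\rho_{12}P(0+)$.

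The crucial observation is that $P$ is \emph{not} continuous across the transmission point, so this interior term is not obviously zero. However, $u$ and $\overline{u}$ both satisfy the transmission conditions (\ref{4})--(\ref{5}) (again using that the coefficients are real), and these are exactly the relations that were used to derive $\rho_{34}w_{1}(\lambda)=\rho_{12}w_{2}(\lambda)$ for the fundamental solutions. The same computation carried out with the pair $(\phi_{i},\chi_{i})$ replaced by $(u,\overline{u})$ yields $\rho_{34}P(0-)=\rho_{12}P(0+)$, so the interior contribution also cancels. Therefore $(\lambda_{0}-\overline{\lambda_{0}})\langle u,u\rangle=0$, and since $u\not\equiv0$ while the weights $\rho_{12},\rho_{34}$ are positive we have $\langle u,u\rangle>0$; hence $\lambda_{0}=\overline{\lambda_{0}}$ is real.

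I expect the interior step to be the only genuine obstacle: the endpoint terms vanish exactly as in ordinary Sturm--Liouville theory, but at $x=0$ the transmission conditions rescale $P$ by the factor $\rho_{34}/\rho_{12}$ rather than preserving it. The hypotheses $\rho_{12}>0$ and $\rho_{34}>0$ enter twice and are both essential: once to guarantee that $\langle\cdot,\cdot\rangle$ is positive definite, and once, through the matched weights $\rho_{34}$ and $\rho_{12}$, to convert that rescaling into an exact cancellation. Were one of the two determinants negative the weighted form would be indefinite and the argument would fail.
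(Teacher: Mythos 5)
Your argument is correct, and it is exactly the ``standard method'' the paper alludes to: note that the paper's own proof environment for this theorem is empty, but the surrounding material (the identity $\rho_{34}w_{1}(\lambda)=\rho_{12}w_{2}(\lambda)$ and the orthogonality corollary) shows that the intended proof is precisely your Lagrange-identity argument with a two-piece weighted inner product, with the endpoint terms killed by the boundary conditions and the interior terms cancelled via the Wronskian transfer across $x=0$. Your key interior step is also sound as stated: the paper's continuation formulas expressing $(\phi_{2}(0),\phi_{2}'(0))$ through $(\phi_{1}(0),\phi_{1}'(0))$ define a transfer matrix whose determinant equals $\rho_{34}/\rho_{12}$ (this follows from the Pl\"{u}cker identity $\rho_{13}\rho_{24}-\rho_{14}\rho_{23}=\rho_{12}\rho_{34}$), so for \emph{any} two functions satisfying the transmission conditions one indeed gets $\rho_{34}P(0-)=\rho_{12}P(0+)$, which is what your cancellation needs. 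One substantive caveat: your weights ($\rho_{34}$ on $[-\pi,0]$, $\rho_{12}$ on $[0,\pi]$) are the \emph{transpose} of those in Corollary~\ref{cor2}, which puts $\rho_{12}$ on the left interval and $\rho_{34}$ on the right; the two conventions are incompatible unless $\rho_{12}=\rho_{34}$. Your choice is the one forced by the paper's own Wronskian identity $\rho_{34}w_{1}=\rho_{12}w_{2}$, so your proof is internally consistent with Section~4 and in fact shows that Corollary~\ref{cor2} as printed has its weights swapped --- be aware of this clash if you go on to use the corollary in the form stated there.
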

\begin{proof}

\end{proof}
\begin{cor}\label{cor2}Let $u(x)$ \textrm{and} $v(x)$   be eigenfunctions corresponding to distinct eigenvalues.
Then they are orthogonal in the sense of the following equality
\begin{eqnarray}\label{2.3}\ \rho_{12}\int_{-\pi}^{0} u(x)v(x)dx + \rho_{34} \int_{0}^{\pi}
u(x)v(x)dx=0.
\end{eqnarray}
\end{cor}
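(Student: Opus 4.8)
The plan is to run the classical Lagrange-identity argument on each of the two subintervals $(-\pi,0)$ and $(0,\pi)$ separately, and then to glue the two pieces together using the transmission conditions, which is exactly where the weights $\rho_{12}$ and $\rho_{34}$ enter. Let $u$ and $v$ be eigenfunctions for distinct eigenvalues $\lambda\neq\mu$; since all eigenvalues are real by the preceding theorem, I may take $u$ and $v$ to be real-valued. Writing $W[u,v]=uv'-u'v$ and subtracting $u\cdot\Gamma(v)=\mu uv$ from $v\cdot\Gamma(u)=\lambda uv$, I obtain the pointwise identity $(\lambda-\mu)uv=\frac{d}{dx}W[u,v]$ wherever the equation holds.

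First I would integrate this identity over $(-\pi,0)$ and over $(0,\pi)$, obtaining $(\lambda-\mu)\int_{-\pi}^{0}uv\,dx=W[u,v](0^{-})-W[u,v](-\pi)$ and $(\lambda-\mu)\int_{0}^{\pi}uv\,dx=W[u,v](\pi)-W[u,v](0^{+})$. Next I would dispose of the genuine endpoint terms: because $u$ and $v$ both satisfy $\Gamma_{3}(y)=0$, the vectors $(u(-\pi),u'(-\pi))$ and $(v(-\pi),v'(-\pi))$ are each orthogonal to the fixed nonzero vector $(\cos\alpha,\sin\alpha)$, hence proportional, so $W[u,v](-\pi)=0$; the identical argument with $\Gamma_{4}(y)=0$ gives $W[u,v](\pi)=0$. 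Multiplying the first integrated identity by $\rho_{12}$, the second by $\rho_{34}$, and adding, I am left with $(\lambda-\mu)\bigl(\rho_{12}\int_{-\pi}^{0}uv\,dx+\rho_{34}\int_{0}^{\pi}uv\,dx\bigr)=\rho_{12}W[u,v](0^{-})-\rho_{34}W[u,v](0^{+})$.

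The heart of the proof, and the step I expect to be the main obstacle, is to show that the interface contribution $\rho_{12}W[u,v](0^{-})-\rho_{34}W[u,v](0^{+})$ vanishes. This is precisely the cross-function version of the Wronskian identity $w(\lambda)=\rho_{34}w_{1}(\lambda)=\rho_{12}w_{2}(\lambda)$ recorded above. Concretely, the two transmission conditions $\Gamma_{1}(y)=\Gamma_{2}(y)=0$ can be written as $A\,Y^{-}=-B\,Y^{+}$, where $Y^{-}=(y(0^{-}),y'(0^{-}))^{\mathsf T}$, $Y^{+}=(y(0^{+}),y'(0^{+}))^{\mathsf T}$, and $A,B$ are the $2\times 2$ coefficient matrices of the one-sided data, with $\det A=\rho_{12}$ and $\det B=\rho_{34}$. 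Applying this to $u$ and to $v$ and stacking the corresponding column vectors into $M^{\pm}=[\,U^{\pm}\ V^{\pm}\,]$ gives $A M^{-}=-B M^{+}$; taking determinants yields $\rho_{12}\det M^{-}=\rho_{34}\det M^{+}$, that is, $\rho_{12}W[u,v](0^{-})=\rho_{34}W[u,v](0^{+})$. Hence the right-hand side above is zero, and since $\lambda\neq\mu$ I may divide by $\lambda-\mu$ to obtain the claimed weighted orthogonality. The only delicate point is keeping the bookkeeping of the determinants $\rho_{12},\rho_{34}$ consistent with the column conventions of the matrix $T$, so that the interface weights match exactly the weights $\rho_{12},\rho_{34}$ appearing in the statement.
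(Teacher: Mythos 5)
Your proof is correct and complete. The Lagrange identity $(\lambda-\mu)uv=\frac{d}{dx}W[u,v]$ integrated over each subinterval, the vanishing of $W[u,v](\pm\pi)$ forced by the boundary conditions $\Gamma_{3}=\Gamma_{4}=0$, and the interface identity $\rho_{12}W[u,v](0^{-})=\rho_{34}W[u,v](0^{+})$ obtained by taking determinants in $AM^{-}=-BM^{+}$ combine to give exactly the stated weighted orthogonality. The paper itself supplies no proof of this corollary (and leaves the proof of the preceding reality theorem empty as well), so there is nothing to compare line by line; what you wrote is the standard argument the paper evidently intends, with the interface step carried out properly.

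One caveat you should be aware of: your remark that the interface identity is ``precisely the cross-function version'' of the paper's displayed identity $w(\lambda)=\rho_{34}w_{1}(\lambda)=\rho_{12}w_{2}(\lambda)$ is not accurate --- the weights there are transposed relative to yours. The paper's identity asserts $\rho_{34}W[\phi,\chi](0^{-})=\rho_{12}W[\phi,\chi](0^{+})$, whereas your determinant computation gives $\rho_{12}W[u,v](0^{-})=\rho_{34}W[u,v](0^{+})$; these are incompatible in general unless $\rho_{12}=\rho_{34}$. The clash is an internal inconsistency of the paper, not an error of yours. Under the natural reading of the matrix $T$ (the columns with superscript $-$ carry the coefficients of the one-sided data at $0^{-}$, so $\det A=\rho_{12}$ and $\det B=\rho_{34}$), your identity is the correct consequence of the transmission conditions $\Gamma_{1}=\Gamma_{2}=0$, and it is the identity that makes the corollary's weights $\rho_{12}$ on $(-\pi,0)$ and $\rho_{34}$ on $(0,\pi)$ come out right. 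By contrast, the paper's interface formulas defining $\phi_{2}$ from $\phi_{1}$ and $\chi_{1}$ from $\chi_{2}$ (from which its Wronskian identity is computed) encode the transmission map with $\rho_{12}$ and $\rho_{34}$ in exchanged roles, and would instead yield orthogonality with the two weights swapped. So you should not cite the paper's Wronskian identity as justification for your key step: your own determinant argument is what proves the corollary as stated, and the bookkeeping issue you flagged at the end of your proposal is exactly the point on which the paper itself slips.
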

Since the Wronskians of $\phi _{2\lambda}(x )$ and $\chi
_{2\lambda}(x )$ are independent of $x$, in particular, by putting
$x=1$ we have
\begin{eqnarray}\label{(ko)}
w(\lambda ) &=&\phi _{2}(\pi,\lambda )\chi_{2}^{\prime }(\pi,\lambda
)-\phi
_{2}^{\prime }(\pi,\lambda )\chi _{2}(\pi,\lambda ) \nonumber\\
&=&\cos \beta \phi _{2}(\pi,\lambda )+\sin\beta\phi _{2}^{\prime
}(\pi,\lambda ).
\end{eqnarray}
 Let $\lambda =s^{2}$, $Ims=t.$ By substituting $(\ref{(c2)})$ and
$(\ref{(c3)})$ in $(\ref{(ko)})$ we obtain easily the following
asymptotic representations\\ \textbf{(i)} If $\sin\beta \neq 0$ and
$\sin\alpha\neq 0$, then
\begin{equation}
w(\lambda )=-\frac{\rho_{24}}{\rho_{12}} \sin\alpha\sin\beta \ s^{2}
\sin^{2} \left[s\pi\right] +O\left( \left| s\right| e^{2\pi\left| t
\right| }\right) \label{(4.15)}
\end{equation}
\textbf{(ii)} If $\sin\beta\neq  0$ and $\sin\alpha= 0$, then
\begin{equation}
w(\lambda )=\frac{\rho_{24}}{\rho_{12}}\cos\alpha \sin\beta  \ s\cos
\left[s\pi\right] \sin \left[s\pi\right] +O\left( e^{2\pi\left| t
\right| }\right)  \label{(4.16)}
\end{equation}
\textbf{(iii)} If $\sin\beta= 0$ and $\sin\alpha\neq 0$, then
\begin{equation}
w(\lambda )=\frac{\rho_{24}}{\rho_{12}}\sin\alpha \cos\beta\ s\sin
\left[s\pi\right] \cos \left[s\pi\right] +O\left(  e^{2\pi\left| t
\right| }\right)  \label{(4.17)}
\end{equation}
\textbf{(iv)} If $\sin\beta=0$ and $\sin\alpha= 0$, then
\begin{equation}
w(\lambda )=-\frac{\rho_{24}}{\rho_{12}}\cos\beta \cos\alpha
\cos^{2}\left[s\pi\right] +O\left( \frac{1}{\left| s\right|}
e^{2\pi\left| t \right| }\right) \label{(4.18)}
\end{equation}
Now we are ready to derived the needed asymptotic formulas for
eigenvalues and  eigenfunctions.
\begin{thm}
The boundary-value-transmission problem $(\ref{1})$-$(\ref{3})$ has
an precisely numerable many real eigenvalues, whose behavior may
be expressed by $\left\{ \lambda _{n,1}\right\} $ with following asymptotic as $n\rightarrow \infty $%

\textbf{(i)} If   $\sin\beta \neq 0$ and $\sin\alpha\neq 0$,then
\begin{equation}
s_{n,1}= (n-\frac{1}{2})+O\left( \frac{1}{n}\right) \label{(5.1)}
\end{equation}
\textbf{(ii)} If $\sin\beta\neq  0$ and $\sin\alpha= 0$, then
\begin{equation}
s_{n,1}=\frac{n}{2} +O\left( \frac{1}{n}\right), \label{(5.2)}
\end{equation}
\textbf{(iii)} If $\sin\beta= 0$ and $\sin\alpha\neq 0$, then
\begin{equation}
s_{n,1}=\frac{n}{2}+O\left( \frac{1}{n}\right) , \label{(5.3)}
\end{equation}
\textbf{(iv)} If $\sin\beta=0$ and $\sin\alpha= 0$, then
\begin{equation}
s_{n,1}=\frac{n}{2} +O\left( \frac{1}{n}\right) , \label{(5.4)}
\end{equation}
where $\lambda _{n,1}=s_{n,1}^{2}$ $\label{t4}$.
\end{thm}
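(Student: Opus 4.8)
The plan is to realise the eigenvalues as the zeros of the characteristic function $w(\lambda)$ and to locate those zeros by comparing $w$ with its dominant asymptotic term through Rouch\'e's theorem. Since the preceding theorem shows that every eigenvalue is real, I would put $\lambda=s^{2}$ with $s$ real, so that $t=\mathrm{Im}\,s=0$ and the exponential factors $e^{2\pi|t|}$ in $(\ref{(4.15)})$--$(\ref{(4.18)})$ reduce to constants. In each of the four cases the asymptotic representation then splits as a trigonometric principal part $w_{0}(s)$ plus a remainder that is exactly one power of $|s|$ smaller; assuming the non-degeneracy $\rho_{24}\neq 0$ so that $w_{0}$ is genuinely dominant, the principal parts are $s^{2}\sin^{2}[s\pi]$ in case (i), $s\sin[s\pi]\cos[s\pi]=\frac{s}{2}\sin[2s\pi]$ in cases (ii)--(iii), and $\cos^{2}[s\pi]$ in case (iv), up to nonzero constants.

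First I would establish the counting statement. On the expanding contours $\Gamma_{N}=\{\,|s|=N+\frac{1}{2}\,\}$ (or the corresponding squares) the standard lower bounds $|\sin[s\pi]|,|\cos[s\pi]|\geq c\,e^{\pi|t|}$ hold, so $|w_{0}(s)|\geq c\,|s|^{m}e^{2\pi|t|}$ with $m=2$ in (i), $m=1$ in (ii)--(iii) and $m=0$ in (iv), while the remainder is $O(|s|^{m-1}e^{2\pi|t|})$. Hence $|w(\lambda)-w_{0}(s)|<|w_{0}(s)|$ on $\Gamma_{N}$ for all large $N$, and Rouch\'e's theorem gives that $w$ and $w_{0}$ have the same number of zeros inside $\Gamma_{N}$. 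Letting $N\to\infty$ shows that the spectrum is a countable set with no finite accumulation point, and that each eigenvalue lies in a shrinking neighbourhood of a zero of $w_{0}$.

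In the simple-zero cases (ii) and (iii) this already pins down the answer. The zeros of $\frac{s}{2}\sin[2s\pi]$ form the half-lattice $s_{n}^{(0)}=\frac{n}{2}$, giving the leading term of $(\ref{(5.2)})$ and $(\ref{(5.3)})$. To obtain the correction I would write $s_{n,1}=s_{n}^{(0)}+\delta_{n}$, substitute into $w(s_{n,1})=0$, and expand the principal part to first order in $\delta_{n}$; since $\frac{d}{ds}\bigl(\frac{s}{2}\sin[2s\pi]\bigr)$ at $s_{n}^{(0)}$ has modulus $\pi|s_{n}^{(0)}|=O(n)$ while the remainder is $O(1)$, solving for $\delta_{n}$ yields $\delta_{n}=O(1/n)$, exactly as claimed.

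The main obstacle is cases (i) and (iv), where the factors $\sin^{2}[s\pi]$ and $\cos^{2}[s\pi]$ give $w_{0}$ a \emph{double} zero at each lattice point. There the first derivative of $w_{0}$ also vanishes, so the division used above is singular and the naive estimate degrades to $O(|s|^{-1/2})$; the Rouch\'e count alone cannot distinguish how the double zero resolves. To recover the sharp $O(1/n)$ rate of $(\ref{(5.1)})$ and $(\ref{(5.4)})$ I would push the fundamental-solution estimates of Theorems~\ref{(4.n)} and \ref{(c1)} one order further, retaining the next term in the expansion of $w(\lambda)$ beyond $(\ref{(4.15)})$ and $(\ref{(4.18)})$. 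Treating $w(s)=0$ as a perturbation of a quadratic in $\sin[s\pi]$ (respectively $\cos[s\pi]$) and selecting the root that survives on the real axis then both fixes the leading location and produces the $O(1/n)$ correction; this splitting analysis, rather than the counting argument, is where the real work lies.
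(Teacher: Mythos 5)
Your overall strategy---realising the eigenvalues as the zeros of $w(\lambda)$, comparing $w$ with the principal parts of (\ref{(4.15)})--(\ref{(4.18)}) by Rouch\'e's theorem on expanding contours, then linearising to get the correction term---is exactly the standard route that the paper's machinery is set up for (the paper's own proof body is in fact left empty, so the only thing to check against is its displayed expansions), and your treatment of the simple-zero cases (ii) and (iii) is correct: the zeros of $s\sin[s\pi]\cos[s\pi]$ are the half-integers $n/2$, the derivative of the principal part there has modulus of order $n$, the remainder is $O(1)$ on the real axis, hence $\delta_{n}=O(1/n)$. Your diagnosis of cases (i) and (iv) is also correct: because $\sin^{2}[s\pi]$ and $\cos^{2}[s\pi]$ have double zeros, formulas (\ref{(4.15)}) and (\ref{(4.18)}) by themselves only localise the eigenvalues to $O(n^{-1/2})$, and one must expand $w$ one order further and treat $w=0$ as a perturbed quadratic.

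The genuine gap is that you never check what leading location your own argument produces, and it does not match the formulas you are trying to prove. In case (i) the principal part is $s^{2}\sin^{2}[s\pi]$, whose zeros are the \emph{integers}; writing $s=n+\varepsilon$ and carrying out your perturbed-quadratic step gives $C\pi^{2}(n\varepsilon)^{2}+A\pi (n\varepsilon)+O(1)=0$ with $C=\frac{\rho_{24}}{\rho_{12}}\sin\alpha\sin\beta\neq0$ and some constant $A$, hence two roots with $\varepsilon=O(1/n)$, i.e. $s_{n,1}=n+O(1/n)$. This is incompatible with the claimed $s_{n,1}=(n-\frac{1}{2})+O(1/n)$ of (\ref{(5.1)}): at such a point $\sin^{2}[s\pi]=1+O(1/n^{2})$, so $w(\lambda)$ is of size $|s|^{2}$ there and cannot vanish. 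Similarly, in case (iv) the zeros of $\cos^{2}[s\pi]$ are the points $m+\frac{1}{2}$, so your method yields $s_{n,1}=n+\frac{1}{2}+O(1/n)$, not $n/2+O(1/n)$ as claimed in (\ref{(5.4)}); no re-indexing repairs this, since $n/2$ runs through the integers as well. In other words, completed honestly, your argument proves the counting statement, proves (\ref{(5.2)}) and (\ref{(5.3)}), and simultaneously shows that (\ref{(5.1)}) and (\ref{(5.4)}) as printed are inconsistent with the paper's own expansions (\ref{(4.15)}) and (\ref{(4.18)}); ending with ``selecting the root that survives'' without confronting this mismatch leaves the stated theorem unproved. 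A further point you flag only in passing but which must be an explicit hypothesis: everything (the theorem included) is vacuous unless $\rho_{24}\neq0$, since otherwise all the displayed principal parts vanish identically.
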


\begin{proof}

\end{proof}
Using this asymptotic expression of eigenvalues we can easily obtain
the corresponding asymptotic expressions for eigenfunctions of the
problem $(\ref{1})$-$(\ref{3})$.
 Recalling that $\phi_{s _{n}}(x)$ is an eigenfunction
 according to the eigenvalue $s _{n},$ and by putting (\ref{(5.1)}) in the (\ref{(4.3)})-(\ref{(4.4)}) for $k=0,1$
 and denoting the corresponding  eigenfunction as

\begin{thm}\label{(4.n)}
\textbf{(i)} If   $\sin\beta \neq 0$ and $\sin\alpha\neq 0$,then
\begin{eqnarray*}
\phi_{s _{n}}(x) &=&\sin\alpha \cos \left[ (n-\frac{1}{2})\left(
x+\pi\right) \right] +O\left( \frac{1}{n}\right) \label{04}
\end{eqnarray*}
\textbf{(ii)} If $\sin\beta\neq  0$ and $\sin\alpha= 0$, then
\begin{eqnarray*}
 \phi_{s _{n}}(x) &=&\frac{\rho_{24}}{\rho_{12}}\sin\alpha
\frac{n}{2} \sin \left[\frac{n}{2}\pi\right] \cos\left[ \frac{n}{2}
x\right] +O\left(1\right) \label{(o1)}
\end{eqnarray*}
\textbf{(iii)} If $\sin\beta= 0$ and $\sin\alpha\neq 0$, then
\begin{eqnarray*}
\phi_{s _{n}}(x) &=&-\frac{2\cos\alpha}{n}%
\sin \left[ \frac{n}{2}(x+\pi)\right] +O\left( \frac{1}{n^2}\right)
\label{(02)}
\end{eqnarray*}
\textbf{(ii)} If $\sin\beta=0$ and $\sin\alpha= 0$, then
\begin{eqnarray*}
\phi_{s _{n}}(x) &=&-\frac{\rho_{24}}{\rho_{12}}\cos\alpha \cos
\left[\frac{n}{2}\pi\right] \cos\left[ \frac{n}{2} x\right]
+O\left(\frac{1}{n}\right)   \label{03}
\end{eqnarray*}
as $\left| \lambda \right| \rightarrow \infty $ ($k=0,1)$. Each of
this asymptotic equalities hold uniformly for $x.$
\end{thm}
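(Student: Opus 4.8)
The plan is to read off each eigenfunction asymptotic by substituting the eigenvalue expansions $(\ref{(5.1)})$--$(\ref{(5.4)})$ into the fundamental-solution asymptotics $(\ref{(4.3)})$--$(\ref{(4.6)})$ at $k=0$. Recall that the eigenfunction $\phi_{s_{n}}(x)$ is the solution $\phi_{\lambda}(x)$, realized as $\phi_{1\lambda}$ on $[-\pi,0)$ and as $\phi_{2\lambda}$ on $(0,\pi]$, evaluated at the eigenvalue $\lambda_{n,1}=s_{n,1}^{2}$. Because the eigenvalues are real, we have $t=\mathrm{Im}\,s_{n,1}=0$, so every factor $e^{|t|(x+\pi)}$ appearing in the remainder terms of $(\ref{(4.3)})$--$(\ref{(4.6)})$ equals $1$; the error terms thus collapse to pure powers of $|s_{n,1}|=O(n)$, uniformly in $x$ over the compact interval.

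I would begin with case (i), where $\sin\alpha\neq0$. Setting $k=0$ in $(\ref{(4.3)})$ gives $\phi_{1,s_{n}}(x)=\sin\alpha\,\cos[s_{n,1}(x+\pi)]+O(n^{-1})$, and inserting $s_{n,1}=(n-\frac{1}{2})+O(n^{-1})$ from $(\ref{(5.1)})$ yields the claim once the argument is expanded. The mechanism here is the elementary bound $\cos(A+\varepsilon)=\cos A+O(\varepsilon)$, applied with $\varepsilon=(x+\pi)\,O(n^{-1})=O(n^{-1})$ uniformly on $[-\pi,0]$; the resulting perturbation merges into the pre-existing $O(n^{-1})$ remainder. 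Case (iv), with $\sin\alpha=0$, is identical but starts from the branch $(\ref{(4.6)})$ together with $s_{n,1}=n/2+O(n^{-1})$, and the two remaining cases combine the $\phi_{1}$ or $\phi_{2}$ representation dictated by the value of $\sin\alpha$ with the matching eigenvalue expansion.

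The one delicate point is controlling the error in those representations whose leading coefficient grows with $s$. In $(\ref{(4.4)})$ the main term carries a prefactor $s\sin[s\pi]$ of order $n$, so a first-order perturbation $O(n^{-1})$ in the arguments of $\sin[s_{n,1}\pi]$ and $\cos[s_{n,1}x]$, once multiplied by this $O(n)$ prefactor, contributes at order $O(1)$ rather than $O(n^{-1})$, which is exactly the remainder recorded in part (ii). I would make this precise by expanding each trigonometric factor to first order, keeping the product of the two leading terms as the stated main term, and bounding every cross term by the product of its individual orders; uniformity in $x$ is automatic since $x$ and $x+\pi$ remain bounded. Finally, repeating the substitution with $k=1$ in $(\ref{(4.3)})$--$(\ref{(4.6)})$ reproduces the companion asymptotics for the derivatives $\phi_{s_{n}}'(x)$, which completes the proof.
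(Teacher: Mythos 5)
Your proposal is correct and follows exactly the route the paper itself indicates (the paper's "proof" is just the remark that the theorem follows "by putting (\ref{(5.1)}) in the (\ref{(4.3)})--(\ref{(4.4)}) for $k=0,1$"): substitute the eigenvalue asymptotics into the fundamental-solution asymptotics and absorb the perturbation of the trigonometric arguments into the remainder. Your treatment is in fact more careful than the paper's, since you make explicit both the vanishing of the exponential factors ($t=0$ for large real eigenvalues) and the reason the $O(n)$ prefactor in case (ii) degrades the remainder to $O(1)$, neither of which the paper records.
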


\end{document}